\documentclass[11pt]{article}

\usepackage[T1]{fontenc}
\usepackage{lmodern}
\usepackage{amsmath,amssymb,amsthm,mathtools}
\usepackage{microtype}
\usepackage{enumitem}
\usepackage{booktabs}
\usepackage{array}
\usepackage[margin=1.06in]{geometry}
\usepackage[hidelinks]{hyperref}
\hypersetup{
  pdftitle={Short Second Proof of the Odd-Modulus Directed Torus Hamilton Decomposition Theorem},
  pdfauthor={SangHyun Park},
  pdfsubject={A second proof of the odd-modulus directed torus Hamilton decomposition theorem},
  pdfkeywords={directed torus, Hamilton decomposition, cyclic lift, fixed-row-sum selection, voltage}
}

\newtheorem{theorem}{Theorem}[section]
\newtheorem{lemma}[theorem]{Lemma}

\newtheorem{proposition}[theorem]{Proposition}
\theoremstyle{definition}
\newtheorem{definition}[theorem]{Definition}
\newtheorem{example}[theorem]{Example}
\theoremstyle{remark}
\newtheorem{remark}[theorem]{Remark}

\newcommand{\U}{(\mathbb Z/m\mathbb Z)^\times}
\newcommand{\Cay}{\operatorname{Cay}}
\newcommand{\code}[1]{\texttt{#1}}
\newcommand{\eps}{\varepsilon}

\title{Short Second Proof of the Odd-Modulus Directed Torus Hamilton Decomposition Theorem}
\author{SangHyun Park, Yonsei University}
\date{June 20, 2026}

\begin{document}
\maketitle

\begin{abstract}
Let
\[
D_d(m)=\Cay\bigl((\mathbb Z/m\mathbb Z)^d,\{e_1,\ldots,e_d\}\bigr).
\]
We give a second proof that $D_d(m)$ decomposes into $d$ directed Hamilton
cycles for every $d\ge2$ and every odd $m\ge3$.  The main combinatorial input
is a fixed-row-sum selection theorem: if each indexed support $A$ is repeated
in $m$ rows, one can select $\lfloor |A|/2\rfloor$ entries in every row so that
every column total is a unit modulo $m$.  These selections become the voltages
of a cyclic lift splitting one coordinate direction.  In fibre coordinates,
\[
\widehat h_j(x,z)=\bigl(h_j(x),z+\mathbf 1_{\{j\in M(x)\}}\bigr);
\]
a unit total carry makes each lifted factor Hamilton, while the new fibres
retain the direction profile needed for the next split.  Iterating from a
directed $m$-cycle with $d$ parallel arc copies yields $D_d(m)$.
\end{abstract}

\medskip
\noindent\textbf{Keywords.}
Directed torus, Hamilton decomposition, fixed-row-sum selection, cyclic lift,
voltage graph.

\medskip
\noindent\textbf{MSC 2020.}
05C20, 05C45, 05C70.

\section{Introduction}\label{sec:introduction}

For integers $m\ge3$ and $d\ge2$, let
\[
D_d(m)=\Cay\bigl((\mathbb Z/m\mathbb Z)^d,\{e_1,\ldots,e_d\}\bigr),
\]
the Cartesian product of $d$ consistently oriented $m$-cycles.  Each vertex
has one outgoing arc in every positive coordinate direction, so a Hamilton
decomposition consists of $d$ directed Hamilton cycles partitioning all
$dm^d$ arcs.

\begin{theorem}\label{thm:main}
For every odd integer $m\ge3$ and every integer $d\ge2$, the arc set of
$D_d(m)$ decomposes into $d$ directed Hamilton cycles.
\end{theorem}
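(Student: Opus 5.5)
We build $D_d(m)$ by induction, splitting one coordinate direction at a time, starting from a directed $m$-cycle $C_m$ in which every arc carries $d$ parallel copies, one labelled by each colour $j\in\{1,\dots,d\}$. This multigraph has an obvious decomposition into $d$ directed Hamilton cycles: colour $j$ takes the $j$-th copy of every arc. Each vertex $x$ also carries a \emph{direction profile}, recording which colour leaves $x$ along which of the (eventually) distinct coordinate directions. The inductive object is an $m^k$-vertex digraph $G_k$ that, once all parallel copies are separated, is the $k$-dimensional torus with its last coordinate direction still carrying a bundle of parallel arcs. It comes with $d$ colour classes $h_1,\dots,h_d$, each a permutation of $V(G_k)$ forming a single cycle.

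The inductive step is a $\mathbb Z/m\mathbb Z$-cyclic lift. Each vertex $x$ has a set $A(x)$ of colours that currently use the bundled direction. We choose a subset $M(x)\subseteq A(x)$ of size $\lfloor |A(x)|/2\rfloor$; these colours are redirected to the new coordinate. The lift is
\[
\widehat h_j(x,z)=\bigl(h_j(x),z+\mathbf 1_{\{j\in M(x)\}}\bigr),
\]
so colour $j$ uses the new $e_{k+1}$-direction exactly at the vertices with $j\in M(x)$. The $\widehat h_j$ still partition the arcs, and each vertex $(x,z)$ again has one out-arc per colour. The target graph is a torus: each $(x,z)$ has one arc in each old split direction, and the bundled arcs are divided between the old bundle and the new direction. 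Choosing $\lfloor |A|/2\rfloor$ preserves the profile needed to continue.

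Hamiltonicity of $\widehat h_j$ is the standard voltage criterion. Since $h_j$ is a single cycle of length $N=m^k$, the lift $\widehat h_j$ is a single cycle of length $mN$ iff the total voltage
\[
\sum_x \mathbf 1_{\{j\in M(x)\}}
\]
around the cycle is a unit mod $m$. The proof therefore needs to choose the sets $M(x)$ so that, for every $j$, the column count $\#\{x: j\in M(x)\}$ is a unit mod $m$, while every row $x$ selects exactly $\lfloor |A(x)|/2\rfloor$ entries.

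The vertices group into classes sharing the same support $A$, and the torus symmetry along the fibre direction makes each support occur in a multiple of $m$ rows. This is precisely the situation of the fixed-row-sum selection theorem stated in the abstract, which I would invoke here: supports repeated in $m$ rows admit selections of size $\lfloor |A|/2\rfloor$ with all column totals units mod $m$. This step is the main obstacle. If I had to prove it, I would first treat one block of $m$ identical rows: rotate a window of $\lfloor|A|/2\rfloor$ positions cyclically through $A$, which makes column totals nearly equal. I would then adjust single rows to shift totals off multiples of any prime divisor of $m$, using that $m$ is odd so that both $1$ and $2$ are units. Oddness is essential: for $m$ even, the parity forced by the row sums breaks this.

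Finally, I would check that after $d-1$ splits every bundle has one arc, so $G_{d}=D_d(m)$. This requires that the profile rule eventually leaves exactly one colour per direction, which follows from the halving bookkeeping. The $d$ lifted cycles are then the desired Hamilton decomposition.
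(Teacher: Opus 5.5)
Your overall architecture matches the paper's. You start from $T_m(d)$ and split a bundle of size $a$ into $\lceil a/2\rceil,\lfloor a/2\rfloor$ via $\widehat h_j(x,z)=(h_j(x),z+\mathbf 1_{\{j\in M(x)\}})$. You get Hamiltonicity from a unit total carry. You use that direction profiles are constant on the fibres $\{x\}\times\mathbb Z/m\mathbb Z$.

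\textbf{The gap.} The genuine gap is the step you flag as the main obstacle. Citing the selection theorem from the abstract imports exactly the part that needs proof, and your sketch for it would not work. A colour $j$ lies in the supports of many blocks, and $k_j$ is the sum of its counts over all of them. Making counts units inside each block separately does not help, because contributions from two blocks can cancel ($1+(-1)=0$). A single-row swap moves two columns of one block at once, so repairing one column modulo a prime $p\mid m$ can break another column that is shared with other blocks. Nothing in your plan coordinates the blocks.

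\textbf{The missing idea.} The paper propagates units along a tree. In each component of the bipartite incidence graph (blocks versus colours), take a minimal connected subgraph containing all colours; it is a tree with no block leaves. Root it at a colour $\rho$. Each block gets one of three residue patterns:
\begin{enumerate}[label=\textup{(\alph*)},leftmargin=2.5em]
\item zero;
\item units on a prescribed set $S$ with $|S|\ge2$ and zero elsewhere, built from pairs with counts $1,m-1$ plus one triple with counts $1,1,m-2$ when $|S|$ is odd (oddness is used here);
\item a transfer $u(\eps_q-\eps_p)$.
\end{enumerate}
The assignment is as follows.
\begin{itemize}[leftmargin=2.5em]
\item The root block below $\rho$ puts units on $\rho$ and its children.
\item Every other block with at least two children puts units on its children and $0$ on its parent.
\item Blocks outside the tree contribute $0$.
\end{itemize}
Colours are then processed by depth. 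A colour $y$ holding a unit $r_y$ from its parent chooses transfer units $u_1,\dots,u_t$ for its one-child blocks so that $r_y-u_1-\dots-u_t$ is still a unit. This is possible because every residue modulo odd $m$ is a sum of two units.

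\textbf{Two smaller repairs.} First, in your coordinates a marked step is $(x,z)\to(x+e_i,z+1)$, which is not a coordinate step. The lift becomes a torus in which marked colours use the new direction only after the shear $u=x_i-z$ (the paper's map $\Psi$), and you assert the torus structure without it.

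Second, your inductive object, a torus with only its last direction bundled, is incompatible with halving: $(4)\to(2,2)$ already leaves two bundles. Peeling off one colour at a time instead cannot work. For $m=3$ and one block with support of size $4$, one entry per row would need four positive column counts summing to $3$. You should carry a general multitorus $T_m(a_1,\dots,a_r)$ with the fibre-constancy invariant and split any $a_i\ge2$. Each split adds one part, so $d-1$ splits reach $D_d(m)$, as you say.
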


Two statements drive the proof.  Theorem~\ref{thm:selection} selects half of
each repeated support in every row so that all column totals are units modulo
$m$.  Theorem~\ref{thm:splitting} uses those totals as voltages to split one
parallel direction while preserving the repeated-row structure.  Indeed, if
$h_j$ is the successor permutation of a Hamilton factor and $M(x)$ is the set
of marked factors at $x$, then the lifted successor is
\begin{equation}\label{eq:intro-skew}
\widehat h_j(x,z)=\bigl(h_j(x),z+\delta_j(x)\bigr),
\qquad
\delta_j(x)=\mathbf 1_{\{j\in M(x)\}}.
\end{equation}
Writing $N$ for the number of base vertices gives
\[
\widehat h_j^{\,N}(x,z)
 =\left(x,z+\sum_v\delta_j(v)\right).
\]
The selected column sum is a unit, hence the fibre return is one $m$-cycle;
the same fibres have constant direction profiles and supply the repeated rows
for the next split.  Starting from a directed $m$-cycle with $d$ parallel arc
copies, repeated splitting reaches $D_d(m)$.

The all-dimensional odd-modulus theorem was first proved in~\cite{ParkAll},
building on the dimension-five and dimension-seven constructions
\cite{ParkD5,ParkD7}.  That proof combines the base dimensions $2,3,5,7$ with
prefix-count and modular-trade constructions, Cartesian product closure, and
the successor rule $b\mapsto2b+1$.  The present note replaces this dimension
synthesis by the single closure operation above.

Hamiltonicity and Hamilton decomposition are distinct: one spanning cycle
need not extend to an arc decomposition.  Hamilton cycles and paths in
products of directed cycles were studied in
\cite{TrotterErdos1978,CurranWitte1985,WitteGallian1984,CurranGallian1996,LanelEtAl2019},
and arc-disjoint Hamilton path packings in~\cite{DarijaniEtAl2022}.  Keating
\cite{Keating1985} treated decompositions of products of two directed cycles;
Meng and Huang~\cite{MengHuang1997} studied decompositions of finite abelian
Cayley digraphs; and Bogdanowicz~\cite{Bogdanowicz2017,Bogdanowicz2020}
studied equal-length decompositions and Hamilton cycles in directed-cycle
products.  Equal-length cycles need not be spanning, and results for
undirected or inverse-closed Cayley graphs do not directly apply to the
positive-basis orientation.  Our lifts are voltage-graph lifts in the sense
of Gross and Tucker~\cite{GrossTucker1987}.

The selection theorem also differs from prescribed-degree modulo-factor
results.  For example, Hasanvand~\cite{Hasanvand2022} realizes fixed degree
residues under high edge-connectivity hypotheses.  Here the incidence graph
may be disconnected, the residues need only lie in the unit group, and the
$m$ identical rows of each support replace global connectivity.

Section~\ref{sec:selection} proves the selection theorem and its even-modulus
obstruction; Section~\ref{sec:splitting} proves cyclic splitting; and
Section~\ref{sec:iteration} completes the iteration.  The appendices record
the Lean formalization and the AI-assisted discovery of the argument.

\section{Fixed-row-sum unit selection}\label{sec:selection}

Throughout this section $m\ge3$ is odd, and residues are taken in
$\mathbb Z/m\mathbb Z$.

\begin{theorem}[Fixed-row-sum unit selection]\label{thm:selection}
Let $I$ and $Y$ be finite sets, and let $(A_\alpha)_{\alpha\in I}$ be an
indexed family of subsets of $Y$, with equal supports at different indices
treated as distinct blocks.  Assume
\[
a_\alpha:=|A_\alpha|\ge2\qquad(\alpha\in I)
\]
and that every $y\in Y$ belongs to at least one $A_\alpha$.  For each
$\alpha$, let $P_\alpha$ be a set of size $m$ and put
$c_\alpha=\lfloor a_\alpha/2\rfloor$.  Then there are maps
\[
F_\alpha:P_\alpha\longrightarrow\binom{A_\alpha}{c_\alpha}
\qquad(\alpha\in I)
\]
such that, for every $y\in Y$,
\begin{equation}\label{eq:global-column}
\gcd\!\left(
\sum_{\alpha\in I}
 \bigl|\{x\in P_\alpha:y\in F_\alpha(x)\}\bigr|,
 m\right)=1.
\end{equation}
\end{theorem}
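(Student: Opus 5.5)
The plan is to reduce Theorem~\ref{thm:selection} to choosing integer column counts. I then fix these counts block by block along a spanning tree, and use the oddness of $m$ only at the root.

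\emph{Step 1: reduction to counts.} For each block $\alpha$, a family $F_\alpha$ is determined, up to what matters in \eqref{eq:global-column}, by the numbers $n_{\alpha,y}=|\{x\in P_\alpha:y\in F_\alpha(x)\}|$ for $y\in A_\alpha$. I claim that any integer vector with $0\le n_{\alpha,y}\le m$ and $\sum_{y\in A_\alpha}n_{\alpha,y}=mc_\alpha$ is realizable. To see this, list the elements of $A_\alpha$ in a fixed order, writing $y$ exactly $n_{\alpha,y}$ times. This gives a word of length $mc_\alpha$. Assign position $i$ (with $0\le i<mc_\alpha$) to the row $i \bmod m$. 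Each row then receives exactly $c_\alpha$ positions. A run of equal letters has length at most $m$, so it meets each row at most once, and each $F_\alpha(x)$ is therefore a genuine $c_\alpha$-subset. It thus suffices to choose such vectors so that every total $T_y=\sum_\alpha n_{\alpha,y}$ is a unit modulo $m$.

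\emph{Step 2: tree propagation.} Form the bipartite incidence graph on $I\sqcup Y$ and treat each connected component separately. Every block has at least two columns, so each component contains at least two columns. In a component, fix a spanning tree rooted at a column $y_0$. Start from a base vector in which, for every block $\alpha$, the tree-parent column $y'$ of $\alpha$ and each child column $y$ can exchange mass freely. Concretely, for each parent/child pair inside $\alpha$ I want to be able to shift mass $t$ from $y'$ to $y$ with $t$ ranging over at least $m$ consecutive integers. To arrange this, I take base values with $n_{\alpha,y}+n_{\alpha,y'}$ close to $m$; this needs to be set up carefully when $a_\alpha$ is odd or a block has several children, for instance by pairing children with the parent one at a time in a staircase pattern. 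I then process the non-root columns in reverse BFS order. When column $y$ with parent block $\alpha$ is reached, every other contribution to $T_y$ is already frozen. Shifting mass between $y$ and the parent column of $\alpha$ then moves $T_y$ through a full residue system, and I fix $T_y$ to be a unit. This shift changes only the parent column, which is processed later.

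\emph{Step 3: the root, which is the main obstacle.} Inside each block, mass is only moved between columns, so $\sum_{y}T_y=\sum_\alpha mc_\alpha\equiv0 \pmod m$ over the component. Hence $T_{y_0}\equiv-\sum_{y\neq y_0}T_y$ is forced. To make it a unit, I keep free the choice at the last child $y_1$ of the root. Write $C$ for the sum of all the other totals, so that I need $u=T_{y_1}$ with both $u$ and $C+u$ units. Since $m$ is odd, each prime $p\mid m$ satisfies $p\ge3$. The conditions modulo $p$ exclude at most the two residues $0$ and $-C$, leaving at least one admissible residue. By the Chinese remainder theorem some $u$ works, and Step 2 guarantees that $T_{y_1}$ can be made congruent to any residue. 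This is exactly where oddness is needed: for $m=2$ the argument fails, in line with the even-modulus obstruction.

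The delicate part is really the bookkeeping in Step 2. I must make sure that every parent/child exchange in every block has a full window of $m$ values while staying within $[0,m]$ and preserving the row sum $mc_\alpha$. I would first try the following base: set the parent column of $\alpha$ to $m$ or $0$ alternately, and spread the remaining mass of $mc_\alpha$ in full units of $m$. I would then check that each child can trade against the parent through the interval $[0,m]$.
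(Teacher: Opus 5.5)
Your Step~1 is correct: the wrap-around argument realizes every count vector with entries in $[0,m]$ and sum $mc_\alpha$. The CRT argument of Step~3 also works, provided a full-window trade against $y_0$ is available. The gap is in Step~2, at any block $\alpha$ with two or more tree children $y_1,\dots,y_k$, and it is not just bookkeeping. Your mechanism needs each $n_{\alpha,y_j}$ to sweep $m$ consecutive values, i.e.\ all of $[0,m-1]$ or $[1,m]$, with the single parent count $n_{\alpha,y'}$ absorbing the change. But $n_{\alpha,y'}\in[0,m]$. If it starts at $m$ and $y_1$ takes $t_1$, then $y_2$ can take at most $m-t_1$, and $t_1$ is dictated by the frozen contributions below $y_1$, which are arbitrary. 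Children starting at $m$ fail in the symmetric way. So one parent column can support only one full window, and your proposed base (parent at $0$ or $m$, the rest in full columns) does not change this. The non-tree columns of $\alpha$ cannot serve as counterweights: they may already be finalized, or they may not exist. For a single block with $a_\alpha=5$, four children share one parent and there are no other columns. A residue class has essentially one representative in $[0,m]$, so the only freedom left is \emph{which} unit each child total hits. You give no argument that this freedom keeps $n_{\alpha,y'}$ in $[0,m]$. The same defect affects Step~3 whenever the root's child block containing $y_1$ has other children, since they deplete the window at $y_0$ before $y_1$ is processed.

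The paper avoids this by processing top-down on a minimal connected subgraph containing all columns, which is a tree with no block leaves, rooted at a column $\rho$. With that order, branching blocks and the root block need no flexibility. They receive fixed patterns from Lemma~\ref{lem:local}(ii): contribution $0$ at the parent and a unit at each child. These come from pairs with counts $1,m-1$ and one triple $1,1,m-2$, all units because $m$ is odd. Only unary blocks carry a free transfer $u(\eps_z-\eps_y)$, which is exactly your single-child trade. All unary children of a column $y$ are handled at once by choosing units $u_1,\dots,u_t$ with $r_y-\sum u_i$ a unit (Lemma~\ref{lem:unit-flow}). Because the root block supplies a unit at $\rho$ directly, no forced root residue ever has to be fixed afterwards.
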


\subsection{Unit arithmetic}

\begin{lemma}\label{lem:two-units}
Every element of $\mathbb Z/m\mathbb Z$ is a sum of two units.
\end{lemma}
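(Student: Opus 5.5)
We argue through the Chinese Remainder Theorem, reducing to odd prime powers and then to odd primes. Write $m=\prod_i p_i^{k_i}$ with every $p_i$ odd. Since
\[
\mathbb Z/m\mathbb Z\cong\prod_i \mathbb Z/p_i^{k_i}\mathbb Z
\qquad\text{and}\qquad
\U\cong\prod_i(\mathbb Z/p_i^{k_i}\mathbb Z)^\times,
\]
it suffices to write each component of a given residue $r$ as a sum of two units modulo $p_i^{k_i}$. We then recombine the component units into two units modulo $m$ using the Chinese Remainder Theorem. Additivity and the unit property are both preserved componentwise, so the recombined elements are units modulo $m$ whose sum is $r$.

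For a prime power $p^k$ with $p$ odd, a residue is a unit exactly when it is nonzero modulo $p$. Given $r$, we look for a unit $u$ with $u\not\equiv0$ and $r-u\not\equiv0 \pmod p$. Modulo $p$, this excludes at most two residue classes for $u$, namely $0$ and $r \bmod p$. Since $p\ge3$, at least one class $\bar u$ remains. We take any lift $u$ of $\bar u$ to $\mathbb Z/p^k\mathbb Z$; then both $u$ and $r-u$ are units modulo $p^k$, which gives the required decomposition $r=u+(r-u)$.

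There is no real obstacle. The only point needing care is the count in the prime-power step, which is where oddness of $m$ is used. For $p=2$ the classes $0$ and $r$ can exhaust $\mathbb Z/2\mathbb Z$ (take $r$ odd), so the argument fails. This matches the fact that an odd residue is never a sum of two odd units modulo an even $m$.
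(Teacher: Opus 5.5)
Your proof is correct and is essentially the paper's argument: for each prime $p\mid m$ pick a residue outside $\{0,\,r \bmod p\}$ (possible since $p\ge3$), then glue these choices by the Chinese remainder theorem so that neither $u$ nor $r-u$ is divisible by any prime factor of $m$. The only difference is packaging: the paper applies CRT directly over the distinct primes, whereas you first decompose into prime-power components and lift from $\mathbb Z/p\mathbb Z$ to $\mathbb Z/p^k\mathbb Z$.
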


\begin{proof}
Let $s\in\mathbb Z/m\mathbb Z$.  For each prime divisor $p$ of $m$, choose
$a_p\in\mathbb Z/p\mathbb Z$ outside $\{0,s\bmod p\}$.  Since $p\ge3$, such
a residue exists.  By the Chinese remainder theorem, an integer $u$ may be
chosen with $u\equiv a_p\pmod p$ for every prime $p\mid m$; reduce $u$ modulo
$m$.  Neither $u$ nor $s-u$ is divisible by a prime factor of $m$, and hence
both are units in $\mathbb Z/m\mathbb Z$.
\end{proof}

\begin{lemma}\label{lem:unit-flow}
Let $r\in\U$ and let $t\ge0$.  There exist units $u_1,\ldots,u_t\in\U$ such
that
\[
r-u_1-\cdots-u_t\in\U.
\]
\end{lemma}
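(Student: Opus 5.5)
The plan is a direct construction: choose the first $t-1$ units freely and use Lemma~\ref{lem:two-units} to split what remains into the last unit plus a unit remainder.

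The case $t=0$ is trivial, since the expression is just $r\in\U$.

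For $t\ge1$, put $u_1=\cdots=u_{t-1}=1$, which are units. Let
\[
s=r-(t-1)\in\mathbb Z/m\mathbb Z,
\]
and apply Lemma~\ref{lem:two-units} to write $s=u_t+w$ with $u_t,w\in\U$. Then
\[
r-u_1-\cdots-u_t=s-u_t=w\in\U,
\]
as required.

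An inductive formulation works equally well. Set $r_0=r$. At each stage, apply Lemma~\ref{lem:two-units} to $r_{k-1}$ to obtain $r_{k-1}=u_k+r_k$ with $u_k,r_k\in\U$. After $t$ steps, $r_t=r-u_1-\cdots-u_t$ is a unit. This version never uses the hypothesis $r\in\U$: Lemma~\ref{lem:two-units} applies to every residue, including $r_0$, so the stronger conclusion holds for arbitrary $r\in\mathbb Z/m\mathbb Z$ whenever $t\ge1$.

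There is no real obstacle. The only point needing care is that Lemma~\ref{lem:two-units} depends on $m$ being odd, because each prime $p\mid m$ must satisfy $p\ge3$. That is guaranteed by the standing hypothesis of this section.
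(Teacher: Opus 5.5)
Your proof is correct and is essentially the paper's argument: both apply Lemma~\ref{lem:two-units} once to split a single residue into two units, and fill the remaining slots with fixed units. The paper pads with cancelling pairs $1,-1$ and so splits on the parity of $t$ (taking $u_1=1$ first when $t$ is even), whereas you absorb all $t-1$ padding units into $s=r-(t-1)$ before invoking the lemma, which removes the case split; your observation that $r\in\U$ is needed only for $t=0$ holds for the paper's proof as well.
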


\begin{proof}
For $t=0$ the assertion is immediate.  If $t$ is odd, write $r=u_1+v$ with
$u_1,v$ units by Lemma~\ref{lem:two-units}, and choose the remaining units in
pairs $1,-1$.  The final difference is $v$.  If $t\ge2$ is even, set
$u_1=1$, write $r-1=u_2+v$ with $u_2,v$ units, and again choose the remaining
units in pairs $1,-1$.
\end{proof}

\subsection{One repeated support}

Let $A$ be a finite set of size $a\ge2$, let $P$ be a set of size $m$, and
put $c=\lfloor a/2\rfloor$.  A \emph{$c$-selection} is a map
\[
F:P\longrightarrow\binom{A}{c}.
\]
Its column residue at $y\in A$ is
\[
\sigma_F(y)=\bigl|\{x\in P:y\in F(x)\}\bigr|\pmod m.
\]
For $y\in A$, write $\eps_y$ for the corresponding standard basis vector of
$(\mathbb Z/m\mathbb Z)^A$.

\begin{lemma}[Local residue vectors]\label{lem:local}
The following $c$-selections exist.
\begin{enumerate}[label=\textup{(\roman*)},leftmargin=2.5em]
\item A selection with $\sigma_F=0$.
\item For every $S\subseteq A$ with $|S|\ge2$, a selection satisfying
\[
\sigma_F(y)\in\U\quad(y\in S),
\qquad
\sigma_F(y)=0\quad(y\in A\setminus S).
\]
\item For distinct $p,q\in A$ and every $u\in\U$, a selection satisfying
\[
\sigma_F=u(\eps_q-\eps_p).
\]
\end{enumerate}
\end{lemma}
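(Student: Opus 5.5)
The plan is to reduce all three parts to one counting statement about column totals, and then choose integer column totals in each case. Since every row of a $c$-selection has exactly $c$ entries, the column totals $n_y=|\{x\in P:y\in F(x)\}|$ satisfy $0\le n_y\le m$ and $\sum_{y\in A}n_y=mc$.

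\textbf{Realization claim.} Conversely, any integers with these two properties are the column totals of some $c$-selection. To see this, fix an order $y_1,\ldots,y_a$ of $A$ and write out the word in which $y_1$ is repeated $n_{y_1}$ times, then $y_2$ is repeated $n_{y_2}$ times, and so on. This word has length $mc$. Identify $P$ with $\{0,\ldots,m-1\}$ and let row $x$ receive the letters in positions $x,\,x+m,\,\ldots,\,x+(c-1)m$. Each letter occupies a contiguous block of length at most $m$, so its positions lie in distinct residue classes modulo $m$. Hence every row receives $c$ distinct elements, and the column totals are the prescribed $n_y$. The only thing to check is this wraparound property.

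With the claim in hand, each part reduces to choosing integers $n_y\in[0,m]$ with $\sum_y n_y=mc$ and the prescribed residues.
\begin{itemize}
\item For (i), take $n_y=m$ on some fixed $c$-subset of $A$ and $n_y=0$ elsewhere. Every row is then that subset, and $\sigma_F=0$.
\item For (iii), let $u'\in\{1,\ldots,m-1\}$ represent $u$. Since $1\le c\le a-1$, we can choose a $c$-set $B\ni p$ with $q\notin B$. Set $n_q=u'$, $n_p=m-u'$, $n_y=m$ for $y\in B\setminus\{p\}$, and $n_y=0$ for all other $y$. The total is $mc$, and $\sigma_F=u(\eps_q-\eps_p)$.
\end{itemize}

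For (ii), let $s=|S|\ge2$. First choose representatives $n_y\in\{1,\ldots,m-1\}$ for $y\in S$, each reducing to a unit, whose sum is $km$ for a suitable integer $k$. Then set $n_y=m$ on exactly $c-k$ elements of $A\setminus S$ and $n_y=0$ on the rest. This second step is possible precisely when $0\le c-k\le a-s$.
\begin{itemize}
\item If $s$ is even, split $S$ into pairs and give each pair the values $(1,m-1)$. This gives $k=s/2$.
\item If $s$ is odd, then $s\ge3$. Give one triple the values $(1,1,m-2)$ and split the rest into pairs $(1,m-1)$. Here $m-2$ is a unit because $\gcd(m-2,m)=\gcd(2,m)=1$ for odd $m$. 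The triple sums to $m$, so $k=(s-1)/2$.
\end{itemize}
In both cases $k=\lfloor s/2\rfloor$. The upper bound $k\le c$ follows from $s\le a$. The lower bound $c-k\le a-s$ amounts to $\lceil s/2\rceil\le\lceil a/2\rceil$, which also follows from $s\le a$.

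I expect the main obstacle to be the step in (ii) of choosing $k$. A fixed choice of units on $S$ can force $k$ outside the admissible window $[\,c-(a-s),\,c\,]$. The explicit pair-and-triple pattern is what places $k$ in the middle of that window. If that failed, I would fall back on Lemma~\ref{lem:unit-flow} to produce units summing to $0$, and then use the symmetry $n_y\mapsto m-n_y$ on $S$, which replaces $k$ by $s-k$, to move $k$ into range.
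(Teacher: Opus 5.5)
Your proof is correct. The column totals you choose are exactly the paper's:
\begin{itemize}
\item in (ii), pairs with counts $(1,m-1)$, plus one triple with counts $(1,1,m-2)$ when $|S|$ is odd;
\item in (ii), exactly $c-\lfloor s/2\rfloor$ filler columns of count $m$, justified by the same inequality $\lceil s/2\rceil\le\lceil a/2\rceil$;
\item in (iii), counts $u$ and $m-u$ on $q$ and $p$, plus $c-1$ full columns.
\end{itemize}

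The difference is how the rows are produced. The paper writes them down directly. Each pair or triple is laid out so that it contributes exactly one element to every row; for the triple, $p$ goes in $x_0$, $q$ in $x_1$, and $r$ in the other $m-2$ rows. So each row visibly receives $h$ elements of $S$ plus the same $c-h$ fillers, and no general lemma is needed.

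You instead first prove the wrap-around realization lemma. It says that every integer vector $(n_y)$ with $0\le n_y\le m$ and $\sum_y n_y=mc$ is the column-count vector of some $c$-selection. That turns all three parts into pure arithmetic about admissible count vectors.

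Your route is more general. It characterizes exactly which count vectors are achievable, so variants of the lemma would be immediate. The paper's route is more self-contained and row-explicit, which matches the pair/triple patterns used in the formalization.

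Your closing paragraph about a possible obstacle is unnecessary. As you already checked, $k=\lfloor s/2\rfloor$ always lies in the admissible window $[\,c-(a-s),\,c\,]$, so the fallback via Lemma~\ref{lem:unit-flow} and the symmetry $n_y\mapsto m-n_y$ is never needed.
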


\begin{proof}
For (i), use the same fixed $c$-subset of $A$ in every row.

For (ii), write $s=|S|$ and $h=\lfloor s/2\rfloor$, and enumerate
$P=\{x_0,\ldots,x_{m-1}\}$.  If $s=2h$, partition $S$ into pairs.  On each
pair $\{p,q\}$, select $p$ in row $x_0$ and $q$ in every other row.  The two
column counts are $1$ and $m-1$, while every row receives one element of the
pair.

If $s=2h+1$, choose $p,q,r\in S$.  Select $p$ in row $x_0$, $q$ in row
$x_1$, and $r$ in the remaining $m-2$ rows.  Partition the other $2h-2$
elements of $S$ into pairs and use the preceding construction.  Every row
receives $h$ elements of $S$, and every active column count belongs to
$\{1,m-1,m-2\}$.  All three counts are relatively prime to the odd integer
$m$.

In either parity, add to every row a fixed set of $c-h$ elements of
$A\setminus S$.  Such a set exists because
\[
(a-s)-(c-h)
 =\left\lceil\frac a2\right\rceil
  -\left\lceil\frac s2\right\rceil\ge0.
\]
The added columns have count $m$, and unused columns have count $0$.  This
proves (ii).

For (iii), represent $u$ by an integer in $\{1,\ldots,m-1\}$.  Select $q$ in
exactly $u$ rows and $p$ in the other $m-u$ rows.  In every row add a fixed
$(c-1)$-subset of $A\setminus\{p,q\}$; it exists because $c-1\le a-2$.
The residues at $q$ and $p$ are $u$ and $-u$, respectively, and every other
column residue is zero.
\end{proof}

\begin{example}\label{ex:block}
For $m=5$, $A=\{p,q,r,s\}$, and $c=2$, take
\[
\begin{array}{c|ccccc}
\text{row} & 0&1&2&3&4\\ \hline
F(x_j) & \{p,s\}&\{q,s\}&\{r,s\}&\{r,s\}&\{r,s\}.
\end{array}
\]
The column counts are $(1,1,3,5)$, so the residue vector is
$(1,1,-2,0)$.  The first three entries are units and the filler column
vanishes modulo $5$.
\end{example}

\subsection{Proof of the selection theorem}

\begin{proof}[Proof of Theorem~\ref{thm:selection}]
Form the bipartite incidence graph $\Gamma$ on $I\sqcup Y$, with
$\alpha y$ an edge exactly when $y\in A_\alpha$, and work componentwise.
In one component choose a connected subgraph $\mathcal T$ of minimum size
containing every $Y$-vertex.  It is a tree with no $I$-leaf, since such a leaf
and its edge could be deleted without losing a $Y$-vertex.

Root $\mathcal T$ at $\rho\in Y$.  Each block vertex
$\alpha\in I\cap V(\mathcal T)$ has a parent $p_\alpha\in Y$ and a nonempty
child set
\[
C_\alpha=N_{\mathcal T}(\alpha)\setminus\{p_\alpha\}.
\]
Choose a child block $\alpha_0$ of the root.  The local patterns are assigned
according to the following table.

\begin{center}
\renewcommand{\arraystretch}{1.18}
\begin{tabular}{>{\raggedright\arraybackslash}p{0.28\linewidth}
                >{\centering\arraybackslash}p{0.25\linewidth}
                >{\centering\arraybackslash}p{0.29\linewidth}}
\toprule
block type & contribution to its parent & contribution to each child\\
\midrule
root block $\alpha_0$ & a unit at $\rho$ & a unit\\
branching block, $|C_\alpha|\ge2$ & $0$ & a unit\\
unary block, $C_\alpha=\{z\}$ & $-u$ & $+u$\\
block outside $\mathcal T$ & $0$ & $0$\\
\bottomrule
\end{tabular}
\end{center}

Apply Lemma~\ref{lem:local}(ii) to $\{\rho\}\cup C_{\alpha_0}$ at the
root block and to $C_\alpha$ at every other tree block with at least two
children; use Lemma~\ref{lem:local}(i) outside $\mathcal T$, and leave the
unary tree blocks unassigned.

Process the $Y$-vertices in increasing distance from $\rho$.

\smallskip
\noindent\emph{Induction invariant.}
When a vertex $y$ is reached, its parent block has already supplied a unit
residue $r_y$.  Every other assigned block incident with $y$ contributes
$0$, and the unassigned incident blocks are its unary child blocks, except
that $\alpha_0$ has already been assigned at the root.  Every previously
processed $Y$-vertex has its final unit residue.

\smallskip
At $\rho$, the unit $r_\rho$ comes from $\alpha_0$.  Suppose the invariant
holds at $y$.  Let $\alpha_1,\ldots,\alpha_t$ be its unary child blocks and
let $z_i$ be the unique child of $\alpha_i$.
By Lemma~\ref{lem:unit-flow}, choose units $u_1,\ldots,u_t$ such that
\[
r_y-u_1-\cdots-u_t
\]
is a unit.  On $P_{\alpha_i}$ use the transfer pattern
$u_i(\eps_{z_i}-\eps_y)$ from Lemma~\ref{lem:local}(iii).  The final residue
at $y$ is now a unit, and each $z_i$ receives the unit $u_i$ as its incoming
residue.  Branching child blocks contribute $0$ at $y$ and units at their
children.  Hence the invariant advances to the next level.

Every $Y$-vertex is eventually processed.  Its total selected degree is
congruent modulo $m$ to the unit constructed at that vertex, which proves
\eqref{eq:global-column}.  Repeating the construction in each component
completes the proof.
\end{proof}

\begin{example}[A two-level assembly]\label{ex:tree}
Suppose the relevant incidence tree is
\[
\rho-\alpha_0-\{y_1,y_2\},
\qquad
y_2-\alpha_1-z,
\]
where $\alpha_1$ is unary.  The root-block pattern supplies units
$r_\rho,r_{y_1},r_{y_2}$.  Assigning
\[
u(\eps_z-\eps_{y_2})
\]
to $\alpha_1$ changes the final residues to
\[
r_\rho,\qquad r_{y_1},\qquad r_{y_2}-u,\qquad u.
\]
Lemma~\ref{lem:unit-flow} chooses the unit $u$ so that $r_{y_2}-u$ is also a
unit.  Thus the unary block passes a unit to $z$ while leaving a unit at $y_2$,
which is the induction step used in Theorem~\ref{thm:selection}.
\end{example}

\begin{remark}[Why the repeated-support hypothesis matters]\label{rem:replication}
The simple repeated-row structure cannot be replaced by arbitrary parallel
incidences.  For $m=3$, three left vertices with two parallel incidences each
to one right vertex satisfy the naive degree conditions, but selecting one
incidence at each left vertex forces selected degree $3$.  In
Theorem~\ref{thm:selection}, each block instead consists of three simple rows
with a common set of distinct columns, which supports the pair and triple
patterns of Lemma~\ref{lem:local}.
\end{remark}

\begin{proposition}[Failure at even modulus]\label{prop:even-failure}
The conclusion of Theorem~\ref{thm:selection} is false for every even $m$.
\end{proposition}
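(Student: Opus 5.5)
To show the conclusion fails for even $m$, I will exhibit one explicit instance of the hypotheses of Theorem~\ref{thm:selection} for which no choice of maps $F_\alpha$ satisfies \eqref{eq:global-column}. The obstruction will be a parity count. For even $m$, a unit modulo $m$ must be odd, so every column total in \eqref{eq:global-column} would have to be odd. On the other hand, summing column totals over $Y$ counts every selected entry once. That gives
\[
\sum_{y\in Y}\sum_{\alpha}\bigl|\{x\in P_\alpha:y\in F_\alpha(x)\}\bigr|
=\sum_{\alpha} m\,c_\alpha,
\]
which is even because $m$ is even.

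The argument then runs in three steps. First, I take the simplest instance: $I=\{\alpha\}$, $Y=A_\alpha=\{y_1,y_2,y_3\}$, so $a_\alpha=3\ge2$ and $c_\alpha=1$, with $|P_\alpha|=m$. Second, for any map $F_\alpha$, the three column totals $n_1,n_2,n_3$ satisfy $n_1+n_2+n_3=m$, since each row selects exactly one element. Third, if all three totals were coprime to the even integer $m$, each $n_i$ would be odd, and a sum of three odd numbers is odd. This contradicts $n_1+n_2+n_3=m$ being even, so no selection works.

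More generally, any instance with $|Y|$ odd gives the same contradiction: the sum of $|Y|$ odd column totals is odd, while the right-hand side above is even. The reason this does not clash with the odd-$m$ proof is that Lemma~\ref{lem:local}(ii) uses the counts $1$ and $m-1$, and $m-1$ is odd, hence not a unit, when $m$ is even. Likewise Lemma~\ref{lem:two-units} fails for even $m$, since $1$ is not a sum of two odd residues modulo an even modulus.

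There is no serious obstacle. The only points to check are that the chosen instance really meets the hypotheses ($a_\alpha\ge2$ and every $y$ covered), and that the argument covers $m=2$ as well, where the only unit is $1$ and the parity argument still applies.
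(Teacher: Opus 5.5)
Your proof is correct and is the paper's argument exactly: a single block with a three-element support and $c=1$, column counts summing to the even number $m$, and every integer coprime to an even $m$ being odd (your extension to any instance with $|Y|$ odd is also valid). One aside is wrong, though your proof does not use it: $m-1\equiv-1$ is always a unit, so the pair pattern of Lemma~\ref{lem:local}(ii) still works at even $m$, and what breaks is the count $m-2$ in the odd-size pattern, which is even.
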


\begin{proof}
Take one block with support $A=\{1,2,3\}$ and let each of its $m$ rows select
one column.  If the three column counts are $n_1,n_2,n_3$, then
\[
n_1+n_2+n_3=m.
\]
A number relatively prime to even $m$ is odd.  If all three column counts
were units modulo $m$, then the left side would be the sum of three odd
integers and hence odd, whereas $m$ is even.
\end{proof}

\section{Cyclic splitting}\label{sec:splitting}

\subsection{Multitori and fibrewise constancy}

For positive integers $a_1,\ldots,a_r$, define the directed multigraph
$T_m(a_1,\ldots,a_r)$ by
\begin{align*}
V\bigl(T_m(a_1,\ldots,a_r)\bigr)&=(\mathbb Z/m\mathbb Z)^r,\\
E\bigl(T_m(a_1,\ldots,a_r)\bigr)
 &=\{(x,i,s):x\in(\mathbb Z/m\mathbb Z)^r,
       1\le i\le r,\ 1\le s\le a_i\}.
\end{align*}
The labelled arc $(x,i,s)$ has tail $x$ and head $x+e_i$.  Thus
\[
D_d(m)=T_m(\underbrace{1,\ldots,1}_{d\text{ entries}}).
\]

Let $\mathcal H=\{H_1,\ldots,H_d\}$ be a directed Hamilton decomposition of
$T_m(a_1,\ldots,a_r)$, where $d=a_1+\cdots+a_r$.  Write
$h_j:V\to V$ for the successor permutation of $H_j$.  For a vertex $x$ and
a direction $i$, set
\[
A_i(x)=\{j:H_j\text{ uses an }i\text{-direction arc at }x\}.
\]
The outgoing arc partition gives $|A_i(x)|=a_i$.

\begin{definition}\label{def:fibered}
The decomposition $\mathcal H$ is \emph{$m$-fibred} if there are a finite
set $\mathcal B$ and a bijection
\[
\phi:\mathcal B\times\mathbb Z/m\mathbb Z\longrightarrow V
\]
such that, for every $\beta\in\mathcal B$ and every direction $i$, the set
$A_i(\phi(\beta,t))$ is independent of $t$.
\end{definition}

\begin{theorem}[Cyclic splitting]\label{thm:splitting}
Let $m\ge3$ be odd.  Suppose that $T_m(a_1,\ldots,a_r)$ has an $m$-fibred
directed Hamilton decomposition.  If $a_i=a\ge2$, then
\[
T_m\left(a_1,\ldots,a_{i-1},
\left\lceil\frac a2\right\rceil,
\left\lfloor\frac a2\right\rfloor,
 a_{i+1},\ldots,a_r\right)
\]
has an $m$-fibred directed Hamilton decomposition.
\end{theorem}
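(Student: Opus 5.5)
We realise the target multitorus as a cyclic $m$-fold lift of $T_m(a_1,\ldots,a_r)$ that splits direction $i$. Let $T'$ denote the target. After permuting coordinates we may write its vertices as $(y,z)$ with $y\in(\mathbb Z/m\mathbb Z)^r$ and $z\in\mathbb Z/m\mathbb Z$. It has $\lceil a/2\rceil$ arcs in direction $e_i$ and $\lfloor a/2\rfloor$ arcs in a new direction $e_{r+1}$, with all other multiplicities unchanged. Consider the projection
\[
\pi(y,z)=y+z e_i .
\]
Both $e_i$ and $e_{r+1}$ project to $e_i$, so $\pi$ is a graph covering of $T_m(a_1,\ldots,a_r)$ with fibres $\pi^{-1}(x)=\{(x-ze_i,z):z\in\mathbb Z/m\mathbb Z\}$. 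We use $z$ as the fibre coordinate. In these coordinates an $e_i$-arc keeps $z$ and an $e_{r+1}$-arc raises $z$ by one, while every other direction keeps $z$. Hence choosing a marked set $M(x)\subseteq A_i(x)$ with $|M(x)|=\lfloor a/2\rfloor$ at every base vertex $x$ gives a lift of $\mathcal H$ with successors
\[
\widehat h_j(x,z)=\bigl(h_j(x),z+\delta_j(x)\bigr),\qquad \delta_j(x)=\mathbf 1_{\{j\in M(x)\}},
\]
as in~\eqref{eq:intro-skew}. Factor $j$ uses an $e_{r+1}$-arc at a lifted vertex exactly when $j\in M(x)$, and an $e_i$-arc when $j\in A_i(x)\setminus M(x)$. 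At each lifted vertex this gives $\lfloor a/2\rfloor$ arcs in direction $e_{r+1}$ and $\lceil a/2\rceil$ in direction $e_i$, and the other directions are copied from the base. So the lifted factors partition the arcs of $T'$.

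Next comes Hamiltonicity. Each $h_j$ is a single $N$-cycle, where $N=m^r$, so $\widehat h_j^{\,N}(x,z)=(x,z+\sigma_j)$ with $\sigma_j=\sum_v\delta_j(v)$. The lifted permutation is then one cycle of length $Nm$ exactly when $\gcd(\sigma_j,m)=1$. The condition we must arrange is therefore that $\sigma_j$ be a unit modulo $m$ for every $j$.

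We obtain this from Theorem~\ref{thm:selection}, using the $m$-fibred structure of $\mathcal H$. Take $I=\mathcal B$, and for each $\beta$ let $A_\beta$ be the common set $A_i(\phi(\beta,t))$ and $P_\beta=\{\phi(\beta,t):t\in\mathbb Z/m\mathbb Z\}$, a set of size $m$. Take $Y=\{1,\ldots,d\}$. Then $a_\beta=a\ge2$, so the theorem applies once every $j$ lies in some $A_\beta$. This holds because a Hamilton cycle $H_j$ must change coordinate $i$ at some point, so it uses an $i$-arc somewhere. The theorem supplies $F_\beta(x)\in\binom{A_\beta}{\lfloor a/2\rfloor}$, and we set $M(x)=F_\beta(x)$ for $x\in P_\beta$. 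The quantity in~\eqref{eq:global-column} is then exactly $\sigma_j$, so every lifted factor is Hamilton.

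Finally, the new decomposition is $m$-fibred with $\mathcal B'=V$ and $\phi'(x,t)$ the fibre point of $\pi^{-1}(x)$ with $z=t$. The direction sets at $(x,t)$ are $A_k(x)$ for $k\ne i$, then $A_i(x)\setminus M(x)$ in direction $e_i$, and $M(x)$ in direction $e_{r+1}$. None of these depends on $t$, because $M$ was defined on base vertices.

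I expect the main obstacle to be the bookkeeping that identifies the lift with $T'$ exactly. This means checking that $\pi$ is a bijection on arcs once each base $i$-arc label has been assigned to either the $e_i$ side or the $e_{r+1}$ side. It also means placing the new coordinate next to $e_i$, so that the multiplicity list matches the statement. The arithmetic content is entirely contained in Theorem~\ref{thm:selection}.
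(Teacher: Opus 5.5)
Your proposal is correct and follows essentially the same route as the paper. The shared steps are the contraction $u+z$ with $z$ as fibre coordinate, a marking $M(x)$ obtained by applying Theorem~\ref{thm:selection} to the blocks $\phi(\beta,\cdot)$, the skew-product successor $\widehat h_j(x,z)=\bigl(h_j(x),z+\delta_j(x)\bigr)$ whose unit total carry makes each lift Hamilton, and the new fibration indexed by old vertices. The label bookkeeping you defer is settled in the paper by exactly your count $|A_i(x)\setminus M(x)|=\lceil a/2\rceil$, $|M(x)|=\lfloor a/2\rfloor$, using vertex-dependent bijections onto the copy labels; what is needed is that $\pi$ is bijective on out-arcs at each vertex, not a global bijection on arcs.
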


\begin{lemma}[Unit marking]\label{lem:marking}
Suppose $m$ is odd, $\mathcal H$ is $m$-fibred, and $a_i=a\ge2$.  Put
$c=\lfloor a/2\rfloor$.  There are subsets
\[
M(x)\subseteq A_i(x),\qquad |M(x)|=c\quad(x\in V),
\]
such that for every factor $H_j$,
\begin{equation}\label{eq:kj}
k_j:=|\{x\in V:j\in M(x)\}|
\quad\text{satisfies}\quad
\gcd(k_j,m)=1.
\end{equation}
\end{lemma}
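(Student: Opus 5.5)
The plan is to reduce the lemma directly to Theorem~\ref{thm:selection}, with the fibres of the $m$-fibred structure serving as the repeated rows. Set $I=\mathcal B$ and $Y=\{1,\ldots,d\}$, the index set of the factors. For each $\beta\in\mathcal B$, let
\[
A_\beta:=A_i(\phi(\beta,t)),
\]
which is independent of $t$ by Definition~\ref{def:fibered}, and let the row set be
\[
P_\beta:=\{\phi(\beta,t):t\in\mathbb Z/m\mathbb Z\},
\]
which has exactly $m$ elements because $\phi$ is a bijection. Then $a_\beta=|A_\beta|=a_i=a\ge2$ and $c_\beta=\lfloor a/2\rfloor=c$, so the block sizes satisfy the hypotheses of the theorem. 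Equal supports at different $\beta$ are treated as distinct blocks, exactly as the theorem allows.

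Next I will check the covering hypothesis: every $j\in Y$ must lie in some $A_\beta$. Since $H_j$ is a Hamilton cycle of $T_m(a_1,\ldots,a_r)$, it must use at least one $i$-direction arc. Otherwise every step of $H_j$ preserves the $i$-th coordinate, so $H_j$ could not visit all of $V$; this uses $r\ge1$ and $m\ge3$. Hence $j\in A_i(x)$ for some $x$, and writing $x=\phi(\beta,t)$ gives $j\in A_\beta$.

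Theorem~\ref{thm:selection} then supplies maps $F_\beta:P_\beta\to\binom{A_\beta}{c}$ satisfying \eqref{eq:global-column}. I define
\[
M(x):=F_\beta(x)\quad\text{for the unique }\beta\text{ with }x\in P_\beta.
\]
The sets $P_\beta$ partition $V$, so $M$ is well defined on all of $V$. By construction $M(x)\subseteq A_\beta=A_i(x)$ and $|M(x)|=c$. Finally, for each factor $j$,
\[
k_j=\sum_\beta\bigl|\{x\in P_\beta:j\in F_\beta(x)\}\bigr|,
\]
which is exactly the column total in \eqref{eq:global-column}, so $\gcd(k_j,m)=1$.

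The only step that is more than bookkeeping is this identification of the data, together with the covering check that every factor uses direction $i$ somewhere. All of the combinatorial difficulty is already contained in Theorem~\ref{thm:selection}, so I expect no real obstacle here.
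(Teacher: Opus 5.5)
Your proposal is correct and follows the paper's proof essentially verbatim: the blocks are the fibres of $\phi$ with supports $A_\beta=A_i(\phi(\beta,t))$. Covering holds because a factor avoiding direction $i$ would keep the $i$th coordinate constant, and Theorem~\ref{thm:selection} is then applied with columns $\{1,\ldots,d\}$, so that the column totals are exactly the $k_j$. Your choice of the fibres themselves as row sets, rather than $\mathbb Z/m\mathbb Z$ transported through $\phi$, is only a cosmetic difference.
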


\begin{proof}
Let $\phi:\mathcal B\times\mathbb Z/m\mathbb Z\to V$ witness
Definition~\ref{def:fibered} and put
$A_\beta=A_i(\phi(\beta,t))$, independent of $t$ and of size $a$.
Every factor index lies in some $A_\beta$, since a factor avoiding direction
$i$ would keep the $i$th coordinate constant.  Apply
Theorem~\ref{thm:selection} with columns $\{1,\ldots,d\}$, supports
$A_\beta$, and row sets $\mathbb Z/m\mathbb Z$, and let
$M(\phi(\beta,t))$ be the selected row.  Its column totals are precisely the
numbers $k_j$ in \eqref{eq:kj}.
\end{proof}

\subsection{Construction of the lifted factors}

Fix a direction $i$ with $a_i=a\ge2$, and put
\[
b=\left\lceil\frac a2\right\rceil,
\qquad
c=\left\lfloor\frac a2\right\rfloor.
\]
Let
\[
\widehat T=T_m(a_1,\ldots,a_{i-1},b,c,a_{i+1},\ldots,a_r).
\]
Write a vertex of $\widehat T$ as
\[
\widehat x=(x_1,\ldots,x_{i-1},u,z,x_{i+1},\ldots,x_r)
\]
and define the contraction
\begin{equation}\label{eq:projection}
\pi(\widehat x)
 =(x_1,\ldots,x_{i-1},u+z,x_{i+1},\ldots,x_r).
\end{equation}
The map
\begin{equation}\label{eq:Psi}
\Psi:V(\widehat T)\longrightarrow V(T_m(a_1,\ldots,a_r))\times\mathbb Z/m\mathbb Z,
\qquad
\Psi(\widehat x)=(\pi(\widehat x),z),
\end{equation}
is a bijection; its inverse replaces the old $i$th coordinate $x_i$ by
$(u,z)=(x_i-z,z)$.

Let $M(x)$ be a marking as in Lemma~\ref{lem:marking}, and set
\[
\delta_j(x)=\mathbf 1_{\{j\in M(x)\}}\in\mathbb Z/m\mathbb Z.
\]
At a lifted vertex over $x$, factor $j$ follows the inherited direction used
by $H_j$ if that direction is not $i$.  If $H_j$ uses direction $i$, it uses
the first child direction when $j\notin M(x)$ and the second child direction
when $j\in M(x)$.  For an old direction $\ell\ne i$, denote its new coordinate
index by
\[
\iota_i(\ell)=
\begin{cases}
\ell,&\ell<i,\\
\ell+1,&\ell>i.
\end{cases}
\]

\begin{lemma}[Local arc partition]\label{lem:local-partition}
Parallel-copy labels can be assigned to the lifted factors so that, at every
vertex of $\widehat T$, each outgoing labelled arc is used by exactly one
factor.
\end{lemma}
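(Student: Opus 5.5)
At a lifted vertex $\widehat x$ with $\pi(\widehat x)=x$, we count, for each new direction, the factors that the rule sends in that direction. Then, for each direction, we biject those factors with the parallel copies of that direction. Labels are assigned independently at each vertex, since the statement only asks for a local partition at every vertex.

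For an old direction $\ell\ne i$, the factors using new direction $\iota_i(\ell)$ at $\widehat x$ are exactly the $j$ with $H_j$ using an $\ell$-arc at $x$. Because $\mathcal H$ decomposes $T_m(a_1,\ldots,a_r)$, these $j$ are in bijection with the labels $s\in\{1,\ldots,a_\ell\}$ via the label of the arc $H_j$ uses at $x$. We keep that label. The new graph $\widehat T$ also has $a_\ell$ parallel copies in direction $\iota_i(\ell)$, so this is a bijection onto the outgoing arcs of that direction.

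For direction $i$, the factors using $i$ at $x$ form $A_i(x)$, with $|A_i(x)|=a$. The rule sends $j\in A_i(x)\setminus M(x)$ to the first child direction (coordinate $u$, index $i$) and $j\in M(x)$ to the second child direction (coordinate $z$, index $i+1$). By Lemma~\ref{lem:marking}, $|M(x)|=c$ and $M(x)\subseteq A_i(x)$, so the two groups have sizes $a-c=b$ and $c$. These match the multiplicities $b$ and $c$ of the two child directions in $\widehat T$. We fix any bijections $A_i(x)\setminus M(x)\to\{1,\ldots,b\}$ and $M(x)\to\{1,\ldots,c\}$, for instance the order-preserving ones, and use them as labels.

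Together these choices assign every factor exactly one outgoing labelled arc at $\widehat x$, and every labelled arc receives exactly one factor. There are $d$ factors and $\sum_{\ell\ne i}a_\ell+b+c=d$ arcs, and the assignment is injective direction by direction. Since $\Psi$ is a bijection, each vertex of $\widehat T$ lies over a unique $x$, so the local data $A_\ell(x)$, $M(x)$ and the labels are well defined at every vertex.

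The only step requiring care is the size identity $|A_i(x)\setminus M(x)|=b$, which uses $|M(x)|=c$ exactly as Lemma~\ref{lem:marking} guarantees. Once that holds, the rest is bookkeeping.
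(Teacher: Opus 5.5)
Your proposal is correct and follows the paper's proof essentially verbatim. In inherited directions you keep the base copy label, and for the two child directions you use bijections $A_i(x)\setminus M(x)\to\{1,\ldots,b\}$ and $M(x)\to\{1,\ldots,c\}$. The only difference is cosmetic: the paper explicitly uses the same labels throughout each fibre $\pi^{-1}(x)$, whereas you allow vertex-by-vertex choices, which is harmless because copy labels affect neither the successor maps nor the direction sets (and your order-preserving choice is fibre-constant anyway).
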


\begin{proof}
At each base vertex $x$, choose bijections
\[
\lambda_x^-:A_i(x)\setminus M(x)\longrightarrow\{1,\ldots,b\},
\qquad
\lambda_x^+:M(x)\longrightarrow\{1,\ldots,c\}.
\]
These bijections exist because the two sets have sizes $b$ and $c$.  Use
$\lambda_x^-(j)$ as the first-child label for
$j\in A_i(x)\setminus M(x)$ and $\lambda_x^+(j)$ as the second-child label
for $j\in M(x)$.  In an
inherited direction $\iota_i(\ell)$, retain the copy label of the base arc of
$H_j$ at $x$.  Use the same labels at all lifted vertices in $\pi^{-1}(x)$.

The base arc partition handles inherited directions, while the two
bijections handle the child directions; together they exhaust all outgoing
labelled arcs of $\widehat T$.
\end{proof}

Let $\widehat h_j$ be the successor map defined by the lifted arc of factor
$j$.

\begin{lemma}[Skew-product formula]\label{lem:skew-formula}
Under the bijection \eqref{eq:Psi},
\begin{equation}\label{eq:skew-formula}
\Psi\,\widehat h_j\,\Psi^{-1}(x,z)
 =\bigl(h_j(x),z+\delta_j(x)\bigr).
\end{equation}
\end{lemma}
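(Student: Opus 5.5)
The plan is a direct computation: take a lifted vertex $\widehat x$ with $\Psi(\widehat x)=(x,z)$, so $\pi(\widehat x)=x$. Then apply one step of $\widehat h_j$ and read off the image under $\Psi$. By Lemma~\ref{lem:local-partition} the successor $\widehat h_j(\widehat x)$ is $\widehat x$ plus a single standard basis vector of $(\mathbb Z/m\mathbb Z)^{r+1}$. Which one is determined by the construction rule, and it depends only on $x=\pi(\widehat x)$, since the labels and markings are constant on $\pi^{-1}(x)$. The labels play no role in the head of an arc, so only the direction matters. I would split into three cases according to the direction used by $H_j$ at $x$ and whether $j\in M(x)$.

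\emph{Case 1: $H_j$ uses an old direction $\ell\ne i$ at $x$.} Then $j\notin A_i(x)$, so $j\notin M(x)$ and $\delta_j(x)=0$. The lifted arc has direction $\iota_i(\ell)$, which adds $1$ to the coordinate $x_\ell$ and leaves $u$ and $z$ unchanged. Applying \eqref{eq:projection}, we get $\pi(\widehat h_j\widehat x)=x+e_\ell=h_j(x)$, and the $z$-coordinate is unchanged. This agrees with \eqref{eq:skew-formula}.

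\emph{Case 2: $j\in A_i(x)\setminus M(x)$.} The lifted arc is the first child direction, so $u\mapsto u+1$ and $z$ is fixed. Hence $u+z$ increases by $1$, which gives $\pi=x+e_i=h_j(x)$ with $\delta_j(x)=0$.

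\emph{Case 3: $j\in M(x)$.} The lifted arc is the second child direction, so $z\mapsto z+1$ and $u$ is fixed. Again $u+z$ increases by $1$, so $\pi=x+e_i=h_j(x)$, while the fibre coordinate becomes $z+1=z+\delta_j(x)$.

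Combining the three cases gives \eqref{eq:skew-formula}. The only point that needs care is the index bookkeeping in Case 1. Inserting the new coordinate shifts the indices of the later directions, so one must check that $\pi$ from \eqref{eq:projection} commutes with the shift by $e_{\iota_i(\ell)}$ versus $e_\ell$. This follows directly from the definition of $\iota_i$, since $\pi$ merges positions $i,i+1$ into position $i$ and leaves the others in order. One should also note that $h_j(x)=x+e_\ell$ exactly when $H_j$ leaves $x$ in direction $\ell$, because every labelled arc $(x,\ell,s)$ has head $x+e_\ell$.
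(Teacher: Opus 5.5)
Your proof is correct and follows the paper's own argument: the same three cases (inherited direction, first child direction with $j\notin M(x)$, second child direction with $j\in M(x)$), each checked by tracking $u+z$ and $z$ through $\pi$ and $\Psi$. You also make explicit two points the paper leaves implicit: that $\delta_j(x)=0$ in the inherited case because $H_j$ has only one outgoing arc at $x$, and the index shift $\iota_i$. One minor correction: the fact that each lifted step adds a single basis vector comes from the definition of $T_m$ and the construction rule, not from Lemma~\ref{lem:local-partition}.
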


\begin{proof}
If $H_j$ uses an inherited direction, the old vertex changes by the same
coordinate step and $z$ is fixed.  If it uses the first child direction, $u$
increases by one, so $u+z$ increases by one while $z$ is fixed; here
$\delta_j(x)=0$.  If it uses the second child direction, $z$ and $u+z$ both
increase by one; here $\delta_j(x)=1$.  These are exactly the three cases in
\eqref{eq:skew-formula}.
\end{proof}

\begin{lemma}[Cyclic carry]\label{lem:cyclic-carry}
Let $V$ be a finite set, let $h$ be a cyclic permutation of $V$, and let
$\delta:V\to\mathbb Z/m\mathbb Z$.  Define
\[
\widehat h(x,z)=(h(x),z+\delta(x)).
\]
If $s=\sum_{x\in V}\delta(x)$ is a unit modulo $m$, then $\widehat h$ is a
cyclic permutation of $V\times\mathbb Z/m\mathbb Z$.
\end{lemma}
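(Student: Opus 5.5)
The plan is to compute the $N$-th iterate of $\widehat h$, where $N=|V|$, and then use the orbit structure of translation by a unit on $\mathbb Z/m\mathbb Z$.

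First, fix a base point $x_0\in V$. Because $h$ is a cyclic permutation of $V$, the points $x_0,h(x_0),\ldots,h^{N-1}(x_0)$ run through every element of $V$ exactly once, and $h^N=\mathrm{id}$. Unwinding the definition of $\widehat h$ by induction on $k$ gives
\[
\widehat h^{\,k}(x,z)=\Bigl(h^k(x),\,z+\sum_{l=0}^{k-1}\delta\bigl(h^l(x)\bigr)\Bigr).
\]
Setting $k=N$, the sum runs over the full $h$-orbit of $x$, which is all of $V$. Hence
\[
\widehat h^{\,N}(x,z)=(x,z+s)\qquad\text{for every }(x,z).
\]

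Next, I will show that the $\widehat h$-orbit of $(x_0,0)$ is all of $V\times\mathbb Z/m\mathbb Z$. Consider the iterates $\widehat h^{\,qN}(x_0,0)=(x_0,qs)$ for $q=0,\ldots,m-1$. Since $s$ is a unit, these reach every $(x_0,z)$ with $z\in\mathbb Z/m\mathbb Z$. Now take an arbitrary point $(x,z)$ and write $x=h^k(x_0)$ with $0\le k<N$. The iterate $\widehat h^{\,k}(x_0,w)$ equals $(x,\,w+c_k)$, where $c_k$ is a constant independent of $w$. Choosing $w=z-c_k$ shows that $(x,z)$ lies in the orbit. So the orbit has all $Nm$ points, and $\widehat h$ is a single cycle.

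The only step needing care is bijectivity of $\widehat h$, so that "orbit" makes sense as a cycle. This is immediate: the inverse is $(y,z)\mapsto\bigl(h^{-1}(y),\,z-\delta(h^{-1}(y))\bigr)$. I do not expect any real obstacle in this argument.
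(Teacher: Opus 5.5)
Your proposal is correct and follows essentially the paper's argument: compute the holonomy $\widehat h^{\,N}(x,z)=(x,z+s)$, note that translation by the unit $s$ is a single $m$-cycle on the fibre, and conclude that the orbit has all $mN$ points. Your explicit transport step $\widehat h^{\,k}(x_0,w)=(x,w+c_k)$ just spells out the paper's remark that between successive returns the first coordinate visits every vertex.
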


\begin{proof}
Let $N=|V|$.  Since the $h$-orbit of any $x$ lists every vertex once,
\begin{equation}\label{eq:holonomy}
\widehat h^{\,N}(x,z)=(x,z+s).
\end{equation}
Translation by the unit $s$ is one $m$-cycle on the fibre, while between
successive returns the first coordinate visits all $N$ vertices.  Hence the
orbit has $mN$ points and equals $V\times\mathbb Z/m\mathbb Z$.
\end{proof}

\begin{lemma}[Preservation of fibrewise constancy]\label{lem:fiber-preservation}
The lifted decomposition is $m$-fibred, with blocks $\pi^{-1}(x)$ indexed by
old vertices $x$.
\end{lemma}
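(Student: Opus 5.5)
The plan is to exhibit the witnessing bijection directly and check the constancy condition of Definition~\ref{def:fibered} one direction at a time. Take $\mathcal B=V(T_m(a_1,\ldots,a_r))$, the set of old vertices. Define $\widehat\phi:\mathcal B\times\mathbb Z/m\mathbb Z\to V(\widehat T)$ by $\widehat\phi(x,z)=\Psi^{-1}(x,z)$. Since $\Psi$ in \eqref{eq:Psi} is a bijection, so is $\widehat\phi$. Its image on $\{x\}\times\mathbb Z/m\mathbb Z$ is exactly the fibre $\pi^{-1}(x)$. Explicitly, the point $\widehat\phi(x,t)$ is the vertex obtained by replacing the old $i$th coordinate $x_i$ with $(x_i-t,t)$.

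Next I compute the direction sets $\widehat A_{k}(\widehat x)$ of the lifted decomposition at $\widehat x=\widehat\phi(x,t)$ for each new direction $k$. The construction before Lemma~\ref{lem:local-partition} fixes the direction used by factor $j$ at a lifted vertex over $x$, and this choice depends only on $x$: on the direction $H_j$ uses at $x$ and on whether $j\in M(x)$. It does not depend on $z$. This gives three cases:
\[
\widehat A_{\iota_i(\ell)}(\widehat x)=A_\ell(x)\ (\ell\ne i),\qquad
\widehat A_{i}(\widehat x)=A_i(x)\setminus M(x),\qquad
\widehat A_{i+1}(\widehat x)=M(x).
\]
Each right-hand side is a function of $x$ alone, so it is independent of $t$. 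This is exactly the constancy required in Definition~\ref{def:fibered}.

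It remains to confirm that the lifted factors form a directed Hamilton decomposition of $\widehat T$, since the definition presupposes one. Lemma~\ref{lem:local-partition} shows that the lifted arcs partition all labelled arcs. Lemma~\ref{lem:skew-formula} conjugates each $\widehat h_j$ to the skew product $(x,z)\mapsto(h_j(x),z+\delta_j(x))$. The sum $\sum_x\delta_j(x)=k_j$ is a unit by Lemma~\ref{lem:marking}, so Lemma~\ref{lem:cyclic-carry} makes each $\widehat h_j$ a single cycle through all $m\cdot m^r$ vertices. If the statement is read as presupposing this, the step can simply be cited.

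There is no real obstacle here. The only care needed is bookkeeping: the new coordinate indices must be matched correctly ($\iota_i$ for inherited directions, $i$ and $i+1$ for the two children). One must also note that the copy labels are irrelevant to the sets $A_k$, because these record only directions.
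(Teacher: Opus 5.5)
Your proof is correct and matches the paper's argument: on each fibre $\pi^{-1}(x)$ you compute $\widehat A_{\iota_i(\ell)}=A_\ell(x)$ for $\ell\ne i$, the first-child set $A_i(x)\setminus M(x)$ and the second-child set $M(x)$, and observe that all of them depend only on $x$. Your explicit choice $\widehat\phi=\Psi^{-1}$ spells out the witnessing bijection that the paper leaves implicit when it says each fibre has size $m$, and the Hamiltonicity check you add is the one the paper carries out in the proof of Theorem~\ref{thm:splitting}.
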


\begin{proof}
Fix $x$ and vary $\widehat x$ in $\pi^{-1}(x)$.  The factor sets using the
new directions are
\[
\widehat A_{i^-}(\widehat x)=A_i(x)\setminus M(x),
\qquad
\widehat A_{i^+}(\widehat x)=M(x),
\]
where $i^-$ and $i^+$ denote the first and second child directions.  For an
old direction $\ell\ne i$,
\[
\widehat A_{\iota_i(\ell)}(\widehat x)=A_\ell(x).
\]
All these sets depend on $x$ but not on the point of the fibre.  Each fibre
has size $m$ by \eqref{eq:projection}.
\end{proof}

\begin{proof}[Proof of Theorem~\ref{thm:splitting}]
Choose the marking $M(x)$ from Lemma~\ref{lem:marking} and construct the
lifted labelled factors as above.  Lemma~\ref{lem:local-partition} shows that
they partition all arcs of the split multitorus.

For factor $j$, the total carry in Lemma~\ref{lem:cyclic-carry} is
\[
\sum_{x\in V}\delta_j(x)
 =k_j\pmod m.
\]
It is a unit by \eqref{eq:kj}.  Since $h_j$ is a cyclic permutation,
Lemmas~\ref{lem:skew-formula} and~\ref{lem:cyclic-carry} show that
$\widehat h_j$ is a cyclic permutation of the entire lifted vertex set.
Thus every lifted factor is Hamilton.  Lemma~\ref{lem:fiber-preservation}
provides the $m$-fibred structure for the resulting decomposition.
\end{proof}

\begin{example}[The split $T_3(3)\to T_3(2,1)$]\label{ex:split}
The tautological decomposition of $T_3(3)$ has factors $H_1,H_2,H_3$, each
with successor $h_j(x)=x+1$ on $\mathbb Z/3\mathbb Z$.  Mark
\[
M(0)=\{1\},\qquad M(1)=\{2\},\qquad M(2)=\{3\}.
\]
Then $k_1=k_2=k_3=1$.  At each base vertex the marked factor uses the single
second-child arc and the other two factors use the two first-child copies.
For $H_1$ the skew product is
\[
\widehat h_1(x,z)=\bigl(x+1,z+\mathbf 1_{\{x=0\}}\bigr).
\]
Its orbit from $(0,0)$ is
\[
\begin{split}
(0,0)&\to(1,1)\to(2,1)\to(0,1)\to(1,2)\\
     &\to(2,2)\to(0,2)\to(1,0)\to(2,0)\to(0,0),
\end{split}
\]
a Hamilton cycle on the nine lifted vertices.  The other two factors are
cyclic shifts of the same calculation.
\end{example}

\section{Iteration}\label{sec:iteration}

\begin{proof}[Proof of Theorem~\ref{thm:main}]
The multitorus $T_m(d)$ is a directed $m$-cycle with $d$ labelled copies of
each arc.  Following one copy at every vertex gives $d$ Hamilton cycles, and
the resulting decomposition is $m$-fibred with one block.

Apply Theorem~\ref{thm:splitting} whenever a multiplicity $a\ge2$ occurs,
replacing it by
\[
\left(\left\lceil\frac a2\right\rceil,
      \left\lfloor\frac a2\right\rfloor\right).
\]
Each split preserves the sum $d$ and increases the number of positive parts by
one.  Starting from one part, after $d-1$ splits there are $d$ positive parts
summing to $d$, hence all are $1$.  The resulting multitorus is
$T_m(1,\ldots,1)=D_d(m)$, and its lifted factors form the required Hamilton
decomposition.
\end{proof}

\section{Discussion}\label{sec:discussion}

The first proof~\cite{ParkAll} contains independent base constructions and
closure theorems; the present note changes only the route to the common
odd-modulus theorem.

The method is intrinsically odd-modular.  Proposition~\ref{prop:even-failure}
shows that the selection theorem itself fails for even $m$; locally, the
triple pattern also loses the unit $m-2$, and Lemma~\ref{lem:two-units} fails
at the prime $2$.  This does not preclude Hamilton decompositions at even
modulus: $D_3(m)$ has one for every $m\ge3$~\cite{ParkD3}.  A uniform
all-dimensional theorem for even modulus remains open.

\appendix

\section{Lean formalization and reproducibility}\label{app:lean}

A Lean~4 formalization using mathlib~\cite{Lean4,Mathlib2020} is archived in
the public repository~\cite{LeanRepo} at commit
\[
\code{7e96c0e1ed81354fd8cb0cde454b95d47fd6149b},
\]
with Lean and mathlib both pinned to \code{v4.30.0-rc2}.  The declaration
\begin{quote}
\small\nolinkurl{Shared.OddMultitori.oddDirectedTorusGoal_replicated_shortcut_closed}
\end{quote}
formalizes Theorem~\ref{thm:main}: its colour classes partition all labelled
outgoing arcs, and every colour successor is a single cycle on the full
vertex set.

\begin{center}
\small
\renewcommand{\arraystretch}{1.14}
\begin{tabular}{>{\raggedright\arraybackslash}p{0.31\linewidth}
                >{\raggedright\arraybackslash}p{0.59\linewidth}}
\toprule
paper component & principal formal object\\
\midrule
multitori and Hamilton factors &
\nolinkurl{OddMultitori.Vertex}, \nolinkurl{OddMultitori.Arc},
\nolinkurl{OddMultitori.Decomposition}\\
$m$-fibred decomposition &
\nolinkurl{OddMultitori.MFiberedDecomposition}\\
selection theorem & local pair/triple patterns and the rooted component-tree
assignment in \nolinkurl{Shared/OddMultitoriSplit.lean}\\
cyclic carry and splitting &
\nolinkurl{Shared.single_cycle_skewProduct_additive_unit_sum},
\nolinkurl{mFiberedSplittingFromReplicatedBalancingGoal}\\
iteration and the Cayley bridge &
\nolinkurl{splitPartToOnesInContext}, \nolinkurl{standardBridgeGoal}\\
\bottomrule
\end{tabular}
\end{center}

The formal split needs only the uniform-support case of
Theorem~\ref{thm:selection}; the paper states the blockwise varying-support
form.  At the archived commit the focused build and kernel-dependency checks
are reproduced by
\begin{verbatim}
lake env lean Shared/OddMultitoriSplit.lean
lake build Shared

import Shared.OddMultitoriSplit
#print axioms
  Shared.OddMultitori.oddDirectedTorusGoal_replicated_shortcut_closed
\end{verbatim}

\section{AI-assisted discovery and disclosure}\label{app:disclosure}

After the proof in~\cite{ParkAll}, the author used OpenAI GPT-5.5 Pro to seek
a shorter argument based on cyclic lifts.  The system proposed the splitting
idea together with an overgeneral balancing statement; the counterexample in
Remark~\ref{rem:replication} led to the repeated-support hypothesis and the
corrected selection theorem.  OpenAI GPT-5.5 Codex assisted with the Lean
implementation, proof search, finite-set bookkeeping, code review, and
synchronization of the formal endpoint with the manuscript.

GPT-5.5 Pro was used for alternative-proof search, error analysis, development
of the corrected argument, and drafting assistance; GPT-5.5 Codex was used
for formalization support.  The author chose the claims, rejected the false
generalization, checked the mathematical proof and Lean source, selected the
literature, and assumes responsibility for the statements, source, and final
manuscript.  AI tools also assisted copy-editing.

\end{document}